\documentclass[12pt]{amsart}   
\usepackage{amscd,amssymb,cite,amsmath,latexsym,comment,epic,eepic,euscript}
\usepackage{graphicx}
\usepackage{enumerate}
\usepackage[initials]{amsrefs}
\usepackage[all]{xy}
\usepackage{tikz}
\usetikzlibrary{decorations.markings,arrows, decorations.pathreplacing}

\headheight=8pt     \topmargin=0pt
\textheight=632pt   \textwidth=432pt
\oddsidemargin=18pt \evensidemargin=18pt



\theoremstyle{plain}
\newtheorem{thm}{Theorem}[section]
\newtheorem{cor}[thm]{Corollary} 
\newtheorem{lemma}[thm]{Lemma} 
\newtheorem{prop}[thm]{Proposition}


\theoremstyle{remark}

\theoremstyle{definition}
\newtheorem{defi}[thm]{Definition}

\newtheorem{remark}[thm]{Remark}

\usepackage{amscd,amssymb,comment,epic,eepic,euscript}
\usepackage{graphicx}
\usepackage{enumerate}
\usepackage[initials]{amsrefs}
\usepackage[all]{xy}

\newcount\theTime
\newcount\theHour
\newcount\theMinute
\newcount\theMinuteTens
\newcount\theScratch
\theTime=\number\time
\theHour=\theTime
\divide\theHour by 60
\theScratch=\theHour
\multiply\theScratch by 60
\theMinute=\theTime
\advance\theMinute by -\theScratch
\theMinuteTens=\theMinute
\divide\theMinuteTens by 10
\theScratch=\theMinuteTens
\multiply\theScratch by 10
\advance\theMinute by -\theScratch

\def\today{{\number\day\space
 \ifcase\month\or
  January\or February\or March\or April\or May\or June\or
  July\or August\or September\or October\or November\or December\fi
 \space\number\year}}

\newcommand{\beq}{\begin{equation}}
\newcommand{\eeq}{\end{equation}}
\newcommand{\bre}{\begin{remark}}
\newcommand{\ere}{\end{remark}}
\newcommand{\beqno}[1]{\begin{equation}\label{#1}}

\newcommand{\mref}[1]{(\ref{#1})}
\newcommand\bZ{\mathbb Z}

\DeclareMathOperator{\Hom}{Hom}

\begin{document}

\title[Additive functions on co-compact coverings]{A short note on additive functions on Riemannian co-compact coverings}

\author[Kha]{Minh Kha}
\address{M.K., Department of Mathematics, Texas A\&M University,
College Station, TX 77843-3368, USA}
\email{kha@math.tamu.edu}

\begin{abstract}
The main purpose of this note is to provide a topological approach to defining additive functions on Riemannian  co-compact  normal coverings.
\end{abstract}
\maketitle
\section{Introduction}
Let $X$ be a connected smooth Riemannian manifold of dimension $n$ equipped with an isometric, properly discontinuous, free, and co-compact action of a discrete deck group $G$. Notice that the deck group $G$ is finitely generated due to the \v{S}varc-Milnor lemma and hence, $\Hom(G, \mathbb{R})$ is finite dimensional. Furthermore, the orbit space $M:=X/G$ is a compact Riemannian manifold when equipped with the metric pushed down from $X$. The action of an element $g \in G$ on $x \in X$ is denoted by $g\cdot x$. Let $\pi$ be the covering map from $X$ onto $M$. Thus, $\pi(g\cdot x)=\pi(x)$ for any $(g,x) \in G \times X$.

Following \cite{LinPinchover}, we define the class of additive functions on the covering $X$ as follows:
\begin{defi}
\label{add}
$ $ 

\begin{itemize}
\item
A real smooth function $u$ on $X$ is said to be \textbf{additive} if there is a homomorphism $\alpha: G \rightarrow \mathbb{R}$ such that 
\begin{equation*}
u(g\cdot x)=u(x)+\alpha(g), \quad \mbox{for all} \quad (g,x) \in G \times X.
\end{equation*}
We denote by $\mathcal{A}(X)$ the space of all additive functions on $X$.
\item

A map $h$ from $X$ to $\mathbb{R}^m$ ($m \in \mathbb{N}$) is called a vector-valued additive function on $X$ if every component of $h$ belongs to $\mathcal{A}(X)$.
\end{itemize}
\end{defi}
We remark that additive functions on co-compact covers appeared in various results such as studying the structure of positive $G$-multiplicative type solutions \cite{Ag2, LinPinchover}, describing the off-diagonal long time asymptotics of the heat kernel \cite{KoSu} and the Green's function asymptotics of periodic elliptic operators \cite{Kha} on a noncompact abelian cover of a compact Riemannian manifold.

A direct construction of additive functions on $X$ can be found in either \cite[Section 3]{KP2} or \cite[Remark 2.6]{LinPinchover}. However, this construction depends on the choice of a fundamental domain for the base $M$ in $X$. A more invariant approach to defining additive functions on covers was mentioned briefly in \cite{Ag2, KP2}. Our goal in this note is to present the full details of this approach for any co-compact covering.
\section{Additive functions on co-compact normal coverings}
We begin with the following notion (see \cite{Ag2, KP2}):
\begin{defi}
\label{1form}
Let $H_{DR}^1(M), H_{DR}^1(X)$ be De Rham cohomologies of $M$ and $X$, respectively. We denote by $\Omega^1(M; G)$ the image in $H_{DR}^1(M)$ of the set of all closed differential 1-forms $\omega$ on $M$ (modulo the exact ones) such that their lifts $\omega$ to $X$ are exact.
In other words, $\Omega^1(M; G)$ is the kernel of the homomorphism
$$\pi^{*}: H_{DR}^1(M) \rightarrow H_{DR}^1(X),$$
where $\pi^{*}$ is the induced homomorphism of the covering map $\pi: X \rightarrow M$.
\end{defi}
By De Rham's theorem, $\Omega^1(M; G)$ is a finite dimensional vector space. Indeed, more is true:
\begin{lemma}
\label{DeRham}
$\Omega^1(M, G) \cong \Hom(G, \mathbb{R})$.
\end{lemma}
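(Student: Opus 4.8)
The plan is to exhibit an explicit isomorphism built from the covering's deck-transformation structure. Starting with a class $[\omega] \in \Omega^1(M;G)$, choose a representative closed $1$-form $\omega$ on $M$; by definition its lift $\pi^*\omega$ is exact on $X$, so there is a smooth function $f$ on $X$ with $df = \pi^*\omega$, and $f$ is unique up to an additive constant. For each $g \in G$, the form $\pi^*\omega$ is $G$-invariant (since $\pi \circ g = \pi$), so $d(f \circ g) = g^*(df) = g^*\pi^*\omega = \pi^*\omega = df$; hence $f(g\cdot x) - f(x)$ is locally constant, and since $X$ is connected it is a constant $\alpha_\omega(g)$. A quick check shows $g \mapsto \alpha_\omega(g)$ is a homomorphism $G \to \mathbb{R}$: from $f((gh)\cdot x) = f(g\cdot(h\cdot x))$ one gets $\alpha_\omega(gh) = \alpha_\omega(g) + \alpha_\omega(h)$. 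This also makes transparent the link to additive functions in Definition~\ref{add}: $f$ is precisely an additive function with homomorphism $\alpha_\omega$.

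Next I would check that this construction descends to a well-defined map $\Phi \colon \Omega^1(M;G) \to \Hom(G,\mathbb{R})$, $[\omega] \mapsto \alpha_\omega$. There are two ambiguities to dispose of. First, replacing $f$ by $f + c$ does not change $f(g\cdot x) - f(x)$, so $\alpha_\omega$ depends only on $\omega$, not on the choice of primitive. Second, if $[\omega] = [\omega']$ in $H^1_{DR}(M)$, then $\omega' = \omega + d\phi$ for some $\phi \in C^\infty(M)$, so $\pi^*\omega' = \pi^*\omega + d(\phi\circ\pi)$, and we may take $f' = f + \phi\circ\pi$; since $\phi\circ\pi$ is $G$-invariant, $f'(g\cdot x) - f'(x) = f(g\cdot x) - f(x)$, giving $\alpha_{\omega'} = \alpha_\omega$. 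Linearity of $\Phi$ is immediate from the linearity of $\omega \mapsto f$ (up to constants).

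The remaining work is to show $\Phi$ is bijective. For injectivity: if $\alpha_\omega \equiv 0$, then $f(g\cdot x) = f(x)$ for all $g$, so $f$ descends to a smooth function $\bar f$ on $M$ with $d\bar f = \omega$, whence $[\omega] = 0$ in $H^1_{DR}(M)$. For surjectivity, given $\alpha \in \Hom(G,\mathbb{R})$, I would invoke the direct construction of additive functions referenced in the excerpt (\cite[Section 3]{KP2} or \cite[Remark 2.6]{LinPinchover}): there exists a smooth $u$ on $X$ with $u(g\cdot x) = u(x) + \alpha(g)$. Then $du$ is $G$-invariant, since $d(u\circ g) = d(u + \alpha(g)) = du$, so $du$ descends to a closed $1$-form $\omega$ on $M$ with $\pi^*\omega = du$ exact; thus $[\omega] \in \Omega^1(M;G)$ and by construction $\alpha_\omega = \alpha$, so $\Phi([\omega]) = \alpha$.

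I expect the main obstacle — or rather the only point requiring care — to be surjectivity, since it is the one step that does not follow by pure diagram-chasing but genuinely needs the existence of at least one additive function for each prescribed homomorphism. If one wishes to keep the argument self-contained and avoid citing \cite{KP2, LinPinchover} for this, the alternative is to realize $\alpha \in \Hom(G,\mathbb{R}) \cong H^1(G;\mathbb{R})$ via the classifying-space map and pull back, or to build $u$ directly by a partition-of-unity patching argument over a fundamental domain; either way the topological content is that $G = \pi_1(M)/\pi_1(X)$-equivariant primitives exist, which is exactly what the kernel description of $\pi^*$ encodes. Everything else is the routine verification that the correspondence $\omega \leftrightarrow (f, \alpha)$ is natural and linear.
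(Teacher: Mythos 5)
Your proof is correct, but it takes a genuinely different route from the paper. The paper's argument is purely algebro-topological: it identifies $H^1_{DR}(M)$ and $H^1_{DR}(X)$ with $\Hom(\pi_1(M),\mathbb{R})$ and $\Hom(\pi_1(X),\mathbb{R})$ via de Rham and Hurewicz, feeds the short exact sequence $0\to\pi_1(X)\to\pi_1(M)\to G\to 0$ (normality of the cover) into the exact contravariant functor $\Hom(\cdot,\mathbb{R})$, and reads off $\ker\pi^*\cong\Hom(G,\mathbb{R})$. You instead construct the isomorphism explicitly, sending $[\omega]$ to the homomorphism $g\mapsto f(g\cdot x)-f(x)$ where $df=\pi^*\omega$; this map is exactly the composition $\Upsilon\circ\Lambda$ that the paper builds \emph{after} the lemma, and your well-definedness and injectivity checks mirror Lemma~\ref{additive_form} and the surrounding discussion. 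The trade-off is in surjectivity: you import the existence of an additive function for each $\alpha\in\Hom(G,\mathbb{R})$ from \cite{KP2, LinPinchover}, i.e.\ from the fundamental-domain construction that this note explicitly sets out to circumvent, whereas the paper deliberately avoids this by proving the lemma topologically and then \emph{deducing} surjectivity of $\Upsilon$ and $\Lambda$ from the resulting dimension count (Theorem~\ref{correspondence}). Your argument is not circular --- the cited existence result is independent --- but it inverts the paper's logical architecture, and if one adopted it the subsequent injectivity-plus-dimension argument would become redundant. Your closing remark that surjectivity could instead be obtained from $H^1(G;\mathbb{R})$ via a classifying-space map is essentially the group-cohomological shadow of the paper's exact-sequence argument, so you correctly identified where the real topological content lives.
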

\begin{proof}
By Hurewicz's theorem (see e.g., \cite{Lee}), the homologies $H_1(M)$ and $H_1(X)$ are isomorphic to the abelianizations of the fundamental groups $\pi_1(M)$ and $\pi_1(X)$, respectively. Therefore, we can identify De Rham cohomologies $H_{DR}^1(M)$ and $H_{DR}^1(X)$ with $\Hom(\pi_1(M), \mathbb{R})$ and $\Hom(\pi_1(X), \mathbb{R})$, correspondingly. Since $X$ is a normal covering of $M$, $\pi_1(X)$ is a normal subgroup of $\pi_1(M)$ and moreover, the sequence
$$0 \rightarrow \pi_1(X) \rightarrow \pi_1(M) \rightarrow G \rightarrow 0$$
is exact. 
Because $\Hom(\cdot, \mathbb{R})$ is a contravariant exact functor, we deduce the exactness of the following sequence of vector spaces:
$$0 \rightarrow \Hom(G, \mathbb{R})  \rightarrow H_{DR}^1(M) \rightarrow  H_{DR}^1(X) \rightarrow 0.$$
Hence, $\Omega^1(M, G)$ is isomorphic to $\Hom(G, \mathbb{R})$.
\end{proof}

Fixing a base point $x_0 \in X$. For any closed 1-form $\omega$ on $M$ such that its lift to $X$ is exact,
there exists a unique function $f_{\omega} \in C^{\infty}(X, \mathbb{R})$ such that $\pi^* \omega=df_{\omega}$ and $f_{\omega}(x_0)=0$. Equivalently,
$$f_{\omega}(x)=\int_{x_0}^x \pi^*\omega, \quad \forall x \in X.$$
\begin{lemma}
\label{additive_form}
For such 1-form $\omega$, we have:
\begin{enumerate}[i)]
\item Fix any $g \in G$, then $f_{\omega}(g \cdot x)-f_{\omega}(x)$ is independent of $x \in X$.

\item If $\pi^*\omega=0$ then $\omega=0$.
\end{enumerate}
\end{lemma}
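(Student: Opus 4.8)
For part (i), the plan is to exploit that $\pi^*\omega$ is a $G$-invariant 1-form on $X$, since $\omega$ lives downstairs on $M$ and $\pi\circ L_g=\pi$ for every $g\in G$, where $L_g$ denotes the action of $g$. Concretely, I would fix $g\in G$ and consider the function $\varphi_g(x):=f_\omega(g\cdot x)-f_\omega(x)$. Differentiating, $d\varphi_g = L_g^*(df_\omega) - df_\omega = L_g^*\pi^*\omega - \pi^*\omega = (\pi\circ L_g)^*\omega - \pi^*\omega = \pi^*\omega - \pi^*\omega = 0$. Since $X$ is connected, a smooth function with vanishing differential is constant, so $\varphi_g(x)$ does not depend on $x$. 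That is the whole argument; the only thing to be careful about is the chain-rule/pullback bookkeeping $(L_g^*\circ\pi^*)\omega=(\pi\circ L_g)^*\omega$, which is just functoriality of pullback, and the fact that $g$ acts by a diffeomorphism so that $L_g^*(df_\omega)=d(f_\omega\circ L_g)=d(L_g^*f_\omega)$.

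For part (ii), I would use the hypothesis already in force: $\omega$ is a closed 1-form on $M$ whose lift $\pi^*\omega$ is exact, so $\pi^*\omega=df_\omega$ as above. If moreover $\pi^*\omega=0$, then $df_\omega=0$ on the connected manifold $X$, hence $f_\omega$ is constant, hence (using $f_\omega(x_0)=0$) identically zero — but that step is not even needed. The real point is that $\pi$ is a local diffeomorphism (a covering map), so $\pi^*:T^*_{\pi(x)}M\to T^*_xX$ is an isomorphism at every point; therefore $\pi^*\omega=0$ forces $\omega=0$ pointwise on $M$, since $\pi$ is surjective. I would phrase it exactly that way: surjectivity of $\pi$ plus pointwise injectivity of $\pi^*$ (because $d\pi_x$ is a linear isomorphism for a covering map) gives $\omega=0$.

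I do not anticipate a genuine obstacle here; both parts are short. If anything, the mild subtlety is purely notational — keeping straight that $\pi^*$ is being used both for the pullback of forms and (in Definition 2.4) for the induced map on cohomology, and making sure the reader sees that "lift of $\omega$ is exact" is what licenses writing $\pi^*\omega=df_\omega$ in the first place. I would state part (i) first since it is the one actually used to produce the homomorphism $\alpha$ (via $g\mapsto \varphi_g$, which one then checks is additive in $g$), and part (ii) second as the injectivity statement that will later show the resulting map $\omega\mapsto f_\omega$ descends to an isomorphism onto $\mathcal{A}(X)$ modulo constants.
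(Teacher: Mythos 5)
Your proposal is correct and follows essentially the same route as the paper: part (i) is the identical chain of pullback identities $d\varphi_g = L_g^*\pi^*\omega - \pi^*\omega = 0$ plus connectedness of $X$, and part (ii) uses the same fact that $\pi$ is a surjective local diffeomorphism, merely phrased via pointwise injectivity of $\pi^*$ on cotangent spaces where the paper composes with a local section $\sigma$ over an evenly covered neighborhood to get $\omega(p)=\sigma^*\pi^*\omega(p)=0$. No gaps.
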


\begin{proof}
$ $

\begin{enumerate}[i)]
\item For each $g \in G$, let $L_{g}$ be the diffeomorphism of $X$ that maps $x$ to $g \cdot x$. Since $\pi \circ L_{g}=\pi$, we get $df_{\omega}=\pi^*\omega=L_{g}^*\pi^*\omega=L_{g}^*df_{\omega}=dL_{g}^*f_{\omega}=d(f_{\omega}\circ L_{g})$. Thus, $d(f_{\omega}-f_{\omega}\circ L_{g})=0$ and so, $f_{\omega}\circ L_{g}-f_{\omega}$ is constant since  $X$ is connected.

\item 
Fix any point $p \in M$. We pick an evenly covered open subset $U$ of $M$ such that it contains $p$. Then there is a smooth local section $\sigma: U \rightarrow X$, i.e., $\pi \circ \sigma=id_{|U}$ (see e.g., \cite[Proposition 4.36]{Lee}). Hence, $\omega(p)=\sigma^{*}\pi^{*}\omega(p)=0$. 

\end{enumerate}
\end{proof}

On $\mathcal{A}(X)$, we introduce an equivalent relation $\sim$ as follows: $f_1 \sim f_2$ in $\mathcal{A}(X)$ if and only if $f_1-f_2=f \circ \pi$ for some function $f\in C^{\infty}(M,\mathbb{R})$. 

By Lemma \ref{additive_form} (i), the map $\omega \mapsto f_{\omega}$ induces the following linear map
\begin{equation}
\label{Lambda}
\begin{split}
\Lambda: \Omega^1(M, G)&\rightarrow \mathcal{A}(X)/\sim
\\ [\omega] &\mapsto [f_{\omega}],
\end{split}
\end{equation}
where $[\omega]$, $[f_{\omega}]$ are the equivalent classes of $\omega$, $f_{\omega}$ in $\Omega^1(M, G)$ and $\mathcal{A}(X)/\sim$, correspondingly. We now claim that $[\omega]=0$ if and only if $[f_{\omega}]=0$, and hence $\Lambda$ is an injective linear map. Indeed, due to Lemma \ref{additive_form} (ii), the condition that $\omega$ is exact is equivalent to $\pi^*\omega=d(f \circ \pi)$ for some $f \in C^{\infty}(M,\mathbb{R})$. But this is the same as $df_{\omega}=d(f\circ \pi)$, or $[f_{\omega}]=0$.

Consider an additive function $f$ on $X$. According to the definition, there exists a unique group homomorphism $\ell_f: G \rightarrow \mathbb{R}$ such that $f(g \cdot x)=f(x)+\ell_f(g)$ for any $g \in G$, $x \in X$. Then the map $f \mapsto \ell_f$ induces the linear map
\begin{equation}
\label{Upsilon}
\begin{split}
\Upsilon: \mathcal{A}(X)/\sim &\rightarrow \Hom{(G, \mathbb{R})}
\\ [f] &\mapsto \ell_f,
\end{split}
\end{equation}
which is injective.

Then the composition $\Upsilon \circ \Lambda$ is also injective. By Lemma \ref{DeRham}, $\dim_{\mathbb{R}}\Omega^1(M,G)=\dim_{\mathbb{R}}\Hom{(G, \mathbb{R})}<\infty$. These facts together imply that the linear maps $\Upsilon$ and  $\Lambda$ are isomorphism. We conclude:

\begin{thm}
\label{correspondence}
The three vector spaces $\Omega^1(M, G)$, $\mathcal{A}(X)/\sim$ and $\Hom{(G, \mathbb{R})}$ are isomorphic to each other. 
\end{thm}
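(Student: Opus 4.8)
The plan is to assemble the two linear maps constructed above into the chain
$$\Omega^1(M,G) \xrightarrow{\ \Lambda\ } \mathcal{A}(X)/{\sim} \xrightarrow{\ \Upsilon\ } \Hom(G,\mathbb{R}),$$
check that each arrow is injective, observe that the two outer spaces are finite dimensional of the same dimension, and then finish with a dimension count which forces everything in sight to be an isomorphism.

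First I would record that $\Lambda$ is well defined on cohomology classes: for a closed $1$-form $\omega$ on $M$ whose lift is exact, Lemma~\ref{additive_form}(i) gives $f_\omega\in\mathcal{A}(X)$, and replacing $\omega$ by $\omega+d\varphi$ with $\varphi\in C^\infty(M,\mathbb{R})$ replaces $f_\omega$ by $\varphi\circ\pi$ plus a constant, which does not change the class in $\mathcal{A}(X)/{\sim}$; thus $[\omega]\mapsto[f_\omega]$ is meaningful and plainly linear. For injectivity of $\Lambda$: if $[f_\omega]=0$ then $df_\omega=d(f\circ\pi)$ for some $f\in C^\infty(M,\mathbb{R})$, hence $\pi^*(\omega-df)=df_\omega-d(f\circ\pi)=0$, and Lemma~\ref{additive_form}(ii) forces $\omega=df$, i.e.\ $[\omega]=0$ in $\Omega^1(M,G)$. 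For injectivity of $\Upsilon$: if $\ell_f=0$ then $f(g\cdot x)=f(x)$ for all $g\in G$, $x\in X$, so $f$ is $G$-invariant and descends to a function on $M=X/G$ that is smooth because $\pi$ is a local diffeomorphism; hence $[f]=0$.

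Next comes the dimension count. By Lemma~\ref{DeRham}, $\Omega^1(M,G)\cong\Hom(G,\mathbb{R})$, and the latter is finite dimensional since $G$ is finitely generated by the \v{S}varc--Milnor lemma. Therefore $\Upsilon\circ\Lambda$ is an injective linear map between two finite-dimensional spaces of equal dimension, hence an isomorphism. Consequently $\Upsilon$ is onto; being also injective, it is an isomorphism, and then $\Lambda=\Upsilon^{-1}\circ(\Upsilon\circ\Lambda)$ is an isomorphism as well. In particular $\mathcal{A}(X)/{\sim}$ is isomorphic to both $\Omega^1(M,G)$ and $\Hom(G,\mathbb{R})$, which is exactly the assertion.

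The step deserving the most care, and the one I regard as the crux, is the injectivity of $\Lambda$: it says that if the additive function attached to $\omega$ differs from a pullback only by a constant, then $\omega$ was already exact on the base. This is precisely where Lemma~\ref{additive_form}(ii) — a $1$-form on $M$ whose lift to $X$ vanishes must itself vanish, proved via local sections of the covering — does the work; everything else is formal linear algebra. I would also note the conceptual shortcut: under the Hurewicz and De Rham identifications $H^1_{DR}(M)\cong\Hom(\pi_1(M),\mathbb{R})$ and $H^1_{DR}(X)\cong\Hom(\pi_1(X),\mathbb{R})$ used in Lemma~\ref{DeRham}, the composite $\Upsilon\circ\Lambda$ sends $[\omega]$ to the period homomorphism $g\mapsto\int_{\gamma_g}\omega$, where $\gamma_g$ is the projection to $M$ of any path in $X$ from $x_0$ to $g\cdot x_0$; this is nothing but the isomorphism $\ker(\pi^*)\cong\Hom(\pi_1(M)/\pi_1(X),\mathbb{R})=\Hom(G,\mathbb{R})$ of Lemma~\ref{DeRham}, so one may alternatively deduce the theorem directly from that identification without a separate dimension count.
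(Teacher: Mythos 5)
Your proposal is correct and follows essentially the same route as the paper: the same two injective linear maps $\Lambda$ and $\Upsilon$, injectivity of $\Lambda$ via Lemma~\ref{additive_form}(ii), and the same dimension count using Lemma~\ref{DeRham} to conclude both are isomorphisms. You merely spell out a few points the paper leaves implicit (well-definedness of $\Lambda$, injectivity of $\Upsilon$) and add an optional remark identifying $\Upsilon\circ\Lambda$ with the period homomorphism.
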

In particular, we obtain:
\begin{cor}
Assume that $G=\bZ^d$. Then there is a smooth $\mathbb{R}^d$-valued function $h$ on $X$ such that for any $(g,x) \in \bZ^d \times X$, 
\begin{equation}
\label{additivity}
h(g\cdot x)=h(x)+g.
\end{equation}
\end{cor}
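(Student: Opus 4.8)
The plan is to apply Theorem \ref{correspondence} in the special case $G = \bZ^d$ and unpack what the isomorphisms give us. The key observation is that $\Hom(\bZ^d, \mathbb{R}) \cong \mathbb{R}^d$ via the map sending a homomorphism $\alpha$ to the vector $(\alpha(e_1), \dots, \alpha(e_d))$, where $e_1, \dots, e_d$ is the standard basis of $\bZ^d$; the inverse sends a vector $v \in \mathbb{R}^d$ to the homomorphism $g \mapsto \langle v, g \rangle$ (equivalently, $g = \sum g_i e_i \mapsto \sum g_i v_i$). So a vector-valued additive function $h = (h_1, \dots, h_d)$ with $h(g \cdot x) = h(x) + g$ is precisely a tuple in which the $j$-th component $h_j$ is additive with associated homomorphism $\ell_{h_j} = e_j^*$, the $j$-th coordinate functional $g \mapsto g_j$.

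First I would invoke Theorem \ref{correspondence}: the map $\Upsilon \colon \mathcal{A}(X)/\!\sim\, \to \Hom(\bZ^d, \mathbb{R})$ from \eqref{Upsilon} is an isomorphism. In particular it is surjective, so for each $j = 1, \dots, d$ there is an additive function $h_j \in \mathcal{A}(X)$ with $\ell_{h_j} = e_j^*$, i.e. $h_j(g \cdot x) = h_j(x) + g_j$ for all $g \in \bZ^d$ and $x \in X$. (One does not even need injectivity of $\Upsilon$ here, only that it is onto, which already follows from the dimension count in the proof of Theorem \ref{correspondence}.) Then I would set $h := (h_1, \dots, h_d) \colon X \to \mathbb{R}^d$. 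This is smooth since each component is, and it is a vector-valued additive function in the sense of Definition \ref{add}.

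Finally I would verify \eqref{additivity} componentwise: for any $(g, x) \in \bZ^d \times X$ and each $j$, the $j$-th component of $h(g \cdot x)$ is $h_j(g \cdot x) = h_j(x) + g_j$, which is the $j$-th component of $h(x) + g$. Hence $h(g \cdot x) = h(x) + g$, as desired.

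There is essentially no obstacle here: the corollary is a direct specialization of Theorem \ref{correspondence}, and the only content is the routine identification $\Hom(\bZ^d, \mathbb{R}) \cong \mathbb{R}^d$ together with the fact that a vector-valued additive function is the same thing as a $d$-tuple of scalar additive functions with prescribed homomorphisms. If anything merits a word of care, it is simply recording that a homomorphism $\bZ^d \to \mathbb{R}$ is determined by its values on the standard generators and can take those values arbitrarily, so that the target homomorphisms $e_1^*, \dots, e_d^*$ are indeed realized in the image of $\Upsilon$.
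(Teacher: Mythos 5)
Your proposal is correct and matches the paper's (implicit) argument: the paper states this corollary as an immediate consequence of Theorem \ref{correspondence} without further proof, and your write-up simply spells out the routine identification $\Hom(\bZ^d,\mathbb{R})\cong\mathbb{R}^d$ and the surjectivity of $\Upsilon$ that the paper takes for granted. No issues.
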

The following standard proposition says that given any additive function $u$ on $X$, then one can pick a harmonic additive function $f$ such that $f-u$ is $G$-periodic.
\begin{prop}
\label{harmonic}
For any $\ell$ in $\Hom(G, \mathbb{R})$, there exists a unique (modulo a real constant) \textbf{harmonic} function $f$ on $X$ such that for any $(g, x) \in G \times X$, we have
\begin{equation}
\label{harmonic_additive}
f(g \cdot x)=f(x)+\ell(g).
\end{equation} 
\end{prop}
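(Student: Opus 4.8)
The plan is to take any smooth additive function $u\in\mathcal{A}(X)$ with $\ell_u=\ell$ — which exists by Theorem \ref{correspondence}; e.g. $u=f_\omega$ for a closed $1$-form $\omega$ on $M$ with $\pi^*\omega=du$ representing the preimage of $\ell$ under $\Upsilon\circ\Lambda$ — and then to correct $u$ by a $G$-periodic function so as to make it harmonic without changing its homomorphism. Concretely, I look for $v\in C^\infty(M,\mathbb{R})$ such that $f:=u-v\circ\pi$ is harmonic; since $v\circ\pi$ is $G$-invariant, any such $f$ automatically satisfies $f(g\cdot x)=f(x)+\ell(g)$. Because $\pi$ is a Riemannian covering, hence a local isometry, we have $\Delta_X(v\circ\pi)=(\Delta_M v)\circ\pi$, where $\Delta_M$ is the Laplace--Beltrami operator of the compact manifold $M$, so the condition $\Delta_X f=0$ becomes the equation $(\Delta_M v)\circ\pi=\Delta_X u$ on $X$.

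I would then note that the right-hand side descends to $M$: the Laplacian on $X$ commutes with each isometry $L_{g}$ and kills constants, so from $u\circ L_{g}=u+\ell(g)$ we get $(\Delta_X u)\circ L_{g}=\Delta_X u$, i.e. $\Delta_X u$ is $G$-invariant and equals $w\circ\pi$ for a unique $w\in C^\infty(M,\mathbb{R})$. The problem thus reduces to solving the Poisson equation $\Delta_M v=w$ on the closed manifold $M$. By Hodge theory (or the Fredholm alternative for the elliptic operator $\Delta_M$ on a compact manifold), this has a solution $v\in C^\infty(M)$, unique up to an additive constant, if and only if $w$ is $L^2(M)$-orthogonal to the kernel of $\Delta_M$, namely the constants — that is, if and only if $\int_M w\,dV=0$.

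The only substantive point — and what I expect to be the main, indeed essentially the only, obstacle — is verifying this single scalar condition. Here I would use the $1$-form $\omega$ above: since $\pi$ is a local isometry it intertwines the codifferentials, so $\Delta_X u=\delta\,du=\delta\,\pi^*\omega=\pi^*(\delta\omega)$ (up to the sign convention for $\Delta$), whence $w=\delta\omega$ on $M$. Then, by the adjointness of $d$ and $\delta=d^*$ on the closed Riemannian manifold $M$,
\[
\int_M w\,dV=\langle\delta\omega,1\rangle_{L^2(M)}=\langle\omega,d1\rangle_{L^2(M)}=0,
\]
because $d1=0$. (Alternatively, integrating $\Delta_X u$ over a fundamental domain one sees the boundary contributions cancel in pairs: the normal derivatives of $u$ on identified faces agree, since $u$ differs there by the constant $\ell(g)$, while the outward normals are opposite; but the $1$-form argument keeps the construction fundamental-domain-free.) Granting this, solving $\Delta_M v=w$ and setting $f:=u-v\circ\pi$ yields a harmonic function on $X$ with $f(g\cdot x)=f(x)+\ell(g)$, proving existence. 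For uniqueness, if $f_1,f_2$ are two harmonic solutions with the same $\ell$, then $f_1-f_2$ is harmonic and $G$-invariant, so it descends to a harmonic function on the compact connected manifold $M$, which is constant; hence $f$ is determined up to an additive real constant, as asserted.
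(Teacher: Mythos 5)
Your proof is correct, and the uniqueness half is identical to the paper's; the existence half, however, takes a genuinely different (and slightly more elementary) route. The paper applies the Hodge theorem to the cohomology class $\Lambda^{-1}([\tilde f])\in\Omega^1(M,G)\subset H^1_{DR}(M)$ to obtain a harmonic $1$-form $\omega$ on $M$, defines $f\in\mathcal{A}(X)$ by $df=\pi^*\omega$, and gets harmonicity for free from $\Delta_X f=\delta_X\pi^*\omega=\pi^*\delta_M\omega=0$. You instead fix any additive $u$ with $\ell_u=\ell$ and correct it by a $G$-periodic function $v\circ\pi$, reducing the problem to the Poisson equation $\Delta_M v=w$ on the closed manifold $M$, whose solvability you must then justify by the compatibility condition $\int_M w\,dV=0$; you verify this correctly by observing that $w=\delta_M\omega$ is a codifferential and hence $L^2$-orthogonal to constants. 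The two arguments are equivalent in substance --- replacing the closed form $\omega_0$ with $\pi^*\omega_0=du$ by its harmonic representative $\omega_0-dv$ is exactly your correction $u\mapsto u-v\circ\pi$ --- but yours invokes only the Fredholm alternative for the scalar Laplacian rather than the Hodge theorem for $1$-forms, at the price of an explicit orthogonality check that the paper's formulation absorbs into the Hodge theorem. Both proofs rest on the same intertwining identity $\delta_X\pi^*=\pi^*\delta_M$ coming from $\pi$ being a local isometry.
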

\begin{proof}
First, we show the existence part. Due to Theorem \ref{correspondence}, let $\tilde{f}$ be a function on $X$ satisfying $\tilde{f}(g \cdot x)=\tilde{f}(x)+\ell(g)$ for any $(g, x) \in G \times X$. 
We recall the isomorphism $\Lambda$ defined in \mref{Lambda}. We put $\alpha:=\Lambda^{-1}([\tilde{f}])\in \Omega^1(M, G)$. By the Hodge theorem, there exists a unique harmonic 1-form $\omega$ on $M$ such that $[\omega]=\alpha$ in $H^1_{DR}(M)$. Let $f$ be a smooth function such that $f \in \mathcal{A}(X)$ and $\pi^*\omega=df$. 
Then $f$ satisfies \mref{harmonic_additive} since $[f]=[\tilde{f}]$ in $\mathcal{A}(X)/\sim$. Thus, it is sufficient to show that $f$ is harmonic on $X$. We denote by $\delta_X$, $\Delta_X$ and $\delta_M$, $\Delta_M$ the codifferential and Laplace-Beltrami operators on $X$ and $M$, respectively. Since the covering map $\pi$ is a local isometry between $X$ and $M$, its pullback $\pi^{*}$ intertwines the codifferential operators, i.e.,
$$\delta_X \pi^*=\pi^* \delta_M.$$
Since $\Delta_M\omega=0$, it follows that $\delta_M \omega=0$. Thus,
\begin{equation*}
\Delta_X f=\delta_X df=\delta_X \pi^*\omega=\pi^* \delta_M\omega=0.
\end{equation*}

For the uniqueness part, let $f_1$ and  $f_2$ be any two harmonic functions on $X$ such that \mref{harmonic_additive} holds for each of these functions. Since $f_1-f_2$ is $G$-periodic, it can be pushed down to a real function $f$ on $M$. Moreover, $\pi^*\Delta_M f=\Delta_X \pi^*f=\Delta_X(f_1-f_2)=0$. Therefore, $f$ must be constant since it is a harmonic function on a compact, connected Riemannian manifold $M$. Thus, $f_1-f_2$ is constant. 
\end{proof}

\begin{cor}
Fixing a base point $x_0$ in $X$. Then to each $\alpha \in \Hom(G, \mathbb{R})$, there exists a unique harmonic function $f_{\alpha}$ defined on $X$ such that $f_{\alpha}(x_0)=0$ and $\Upsilon([f_{\alpha}])=\alpha$, where $\Upsilon$ is introduced in \mref{Upsilon}. Consequently, 
\begin{equation*}
\mathcal{A}(X)=\bigsqcup_{\alpha \in \Hom(G, \mathbb{R})}\big\{f_{\alpha}+\varphi \hspace{2pt} \mid \hspace{2pt} \varphi \hspace{3pt} \mbox{is periodic}\hspace{1pt}\big\}.
\end{equation*}
\end{cor}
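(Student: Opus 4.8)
The plan is to obtain $f_{\alpha}$ by normalizing the harmonic additive function produced by Proposition \ref{harmonic}, and then to read the displayed decomposition of $\mathcal{A}(X)$ directly off the defining property of the map $\Upsilon$ in \mref{Upsilon}. First I would apply Proposition \ref{harmonic} with $\ell=\alpha$ to get a harmonic function $f$ on $X$ with $f(g\cdot x)=f(x)+\alpha(g)$ for all $(g,x)\in G\times X$, unique up to an additive real constant. Setting $f_{\alpha}:=f-f(x_0)$ gives a harmonic function that still satisfies the same additivity relation (the constant cancels on both sides) and now satisfies $f_{\alpha}(x_0)=0$. To verify $\Upsilon([f_{\alpha}])=\alpha$, recall that $\Upsilon([f_{\alpha}])=\ell_{f_{\alpha}}$ is by definition the unique homomorphism $G\to\mathbb{R}$ with $f_{\alpha}(g\cdot x)=f_{\alpha}(x)+\ell_{f_{\alpha}}(g)$; comparing this with the relation $f_{\alpha}$ already satisfies forces $\ell_{f_{\alpha}}=\alpha$.

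For uniqueness, suppose $\tilde f_{\alpha}$ is another harmonic function on $X$ with $\tilde f_{\alpha}(x_0)=0$ and $\Upsilon([\tilde f_{\alpha}])=\alpha$. The latter condition says precisely that $\tilde f_{\alpha}(g\cdot x)=\tilde f_{\alpha}(x)+\alpha(g)$, so $\tilde f_{\alpha}$ also satisfies \mref{harmonic_additive} with $\ell=\alpha$. By the uniqueness clause of Proposition \ref{harmonic}, $\tilde f_{\alpha}-f_{\alpha}$ is a real constant, and evaluating at $x_0$ shows that constant is $0$; hence $\tilde f_{\alpha}=f_{\alpha}$.

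It remains to establish the decomposition. Given $u\in\mathcal{A}(X)$, put $\alpha:=\ell_u=\Upsilon([u])\in\Hom(G,\mathbb{R})$. Then $u-f_{\alpha}$ satisfies $(u-f_{\alpha})(g\cdot x)=(u-f_{\alpha})(x)$ for all $(g,x)$, so it is $G$-invariant and therefore of the form $\varphi\circ\pi$ for a smooth function on $M$ (using that $\pi$ is a smooth covering); writing $\varphi$ for this periodic function on $X$, we get $u=f_{\alpha}+\varphi$. Conversely, for any $\alpha$ and any periodic $\varphi$, the function $f_{\alpha}+\varphi$ lies in $\mathcal{A}(X)$ with associated homomorphism $\alpha+0=\alpha$. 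This last remark also yields disjointness of the union: if $f_{\alpha}+\varphi=f_{\beta}+\psi$ with $\varphi,\psi$ periodic, then the homomorphism associated to the common function (which is additive in the function and vanishes on periodic functions) equals $\alpha$ on one side and $\beta$ on the other, so $\alpha=\beta$, and then $\varphi=\psi$.

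I do not expect a genuine obstacle here: once Proposition \ref{harmonic} and Theorem \ref{correspondence} are in hand, the whole statement is bookkeeping with the definitions of $\Lambda$ and $\Upsilon$. The only point needing a little care is to keep the normalization $f_{\alpha}(x_0)=0$ consistently threaded through both the existence and the uniqueness arguments, so that the ambiguity-up-to-a-constant in Proposition \ref{harmonic} is removed exactly once.
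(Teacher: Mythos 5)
Your proposal is correct and matches the paper's (essentially unwritten) argument: the paper states this as an immediate corollary of Proposition \ref{harmonic}, and your proof is exactly the intended one — normalize the harmonic additive function from that proposition by subtracting its value at $x_0$, use the uniqueness-up-to-a-constant clause to pin down $f_{\alpha}$, and then obtain the decomposition by noting that $u-f_{\ell_u}$ is $G$-invariant and hence periodic. All steps, including the disjointness of the union via $\Upsilon$ vanishing on periodic functions, are handled correctly.
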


\begin{remark}
When $G=\bZ^d$, 
the Albanese pseudo-metric $d_G$ introduced in \cite[Section 2]{KoSu} is actually the pseudo-distance arising from \textbf{any} harmonic vector-valued additive function $h$ satisfying \mref{additivity}, i.e., $d_G(x,y)=|h(x)-h(y)|$ for any $x,y \in X$.
\end{remark}
\section{Acknowledgements}
The author was partially supported by the NSF grant DMS-1517938.

\begin{bibdiv}
\begin{biblist}
\bib{Ag2}{article}{
    AUTHOR = {Agmon, Shmuel},
     TITLE = {On positive solutions of elliptic equations with periodic
              coefficients in {${\bf R}^n$}, spectral results and
              extensions to elliptic operators on {R}iemannian manifolds},
 BOOKTITLE = {Differential equations ({B}irmingham, {A}la., 1983)},
    SERIES = {North-Holland Math. Stud.},
    VOLUME = {92},
     PAGES = {7--17},
 PUBLISHER = {North-Holland, Amsterdam},
      YEAR = {1984},
   MRCLASS = {35J15 (35B05 35P99 58G25)},
  MRNUMBER = {799327 (87a:35060)},
MRREVIEWER = {W. Allegretto},
}
\bib{Kha}{article}{
AUTHOR= {Kha, Minh},
 TITLE = {Green's function asymptotics of periodic elliptic operators on abelian coverings of compact manifolds},
note={arXiv:1511.00276, preprint},
}
\bib{KoSu}{article}{
    AUTHOR = {Kotani, Motoko},
    AUTHOR = {Sunada, Toshikazu},
     TITLE = {Albanese maps and off diagonal long time asymptotics for the
              heat kernel},
   JOURNAL = {Comm. Math. Phys.},
  FJOURNAL = {Communications in Mathematical Physics},
    VOLUME = {209},
      YEAR = {2000},
    NUMBER = {3},
     PAGES = {633--670},
      ISSN = {0010-3616},
     CODEN = {CMPHAY},
   MRCLASS = {58J37 (58J35 58J65)},
  MRNUMBER = {1743611 (2001h:58036)},
MRREVIEWER = {Ivan G. Avramidi},
}
\bib{KP2}{article}{
    AUTHOR = {Kuchment, Peter},
    AUTHOR = {Pinchover, Yehuda},
     TITLE = {Liouville theorems and spectral edge behavior on abelian
              coverings of compact manifolds},
   JOURNAL = {Trans. Amer. Math. Soc.},
  FJOURNAL = {Transactions of the American Mathematical Society},
    VOLUME = {359},
      YEAR = {2007},
    NUMBER = {12},
     PAGES = {5777--5815},
      ISSN = {0002-9947},
     CODEN = {TAMTAM},
   MRCLASS = {58J05 (35B05 35J15 35P05 58J50)},
  MRNUMBER = {2336306 (2008h:58037)},
MRREVIEWER = {Alberto Parmeggiani},
}
\bib{Lee}{book}{
    AUTHOR = {Lee, John M.},
     TITLE = {Introduction to smooth manifolds},
    SERIES = {Graduate Texts in Mathematics},
    VOLUME = {218},
   EDITION = {Second Edition},
 PUBLISHER = {Springer, New York},
      YEAR = {2013},
     PAGES = {xvi+708},
      ISBN = {978-1-4419-9981-8},
   MRCLASS = {58-01 (53-01 57-01)},
  MRNUMBER = {2954043},
}
\bib{LinPinchover}{article}{
    AUTHOR = {Lin, Vladimir Ya.},
    AUTHOR = {Pinchover, Yehuda},
     TITLE = {Manifolds with group actions and elliptic operators},
   JOURNAL = {Mem. Amer. Math. Soc.},
  FJOURNAL = {Memoirs of the American Mathematical Society},
    VOLUME = {112},
      YEAR = {1994},
    NUMBER = {540},
     PAGES = {vi+78},
      ISSN = {0065-9266},
     CODEN = {MAMCAU},
   MRCLASS = {58G03 (35C15 35J15)},
  MRNUMBER = {1230774 (95d:58119)},
MRREVIEWER = {Vadim A. Ka{\u\i}manovich},
}
\end{biblist}
\end{bibdiv}
\end{document}